\documentclass[UKenglish]{amsart}
\usepackage[utf8]{inputenc}
\usepackage{tikz-cd} 
\usepackage{paralist}
\usepackage{amsmath,amsthm,amssymb,amsfonts, mathtools}
\usepackage{mathrsfs}
\usepackage{xcolor}%
\usepackage{textcomp}%
\usepackage{manyfoot}%
\usepackage{booktabs}%
\usepackage{algorithm}%
\usepackage{hyperref}
\hypersetup{colorlinks,%
	citecolor=black,%
	filecolor=black,%
	linkcolor=black,%
	urlcolor=green,%
	pdftex}

\DeclareMathOperator{\Supp}{Supp}
\DeclareMathOperator{\Diff}{Diff}

\DeclareMathOperator{\coeff}{coeff}
\DeclareMathOperator{\Sing}{Sing}
\DeclareMathOperator{\Spec}{Spec}

\newcommand{\Q}{$\mathbb{Q}$}

\newtheorem{teo}{Theorem}
\numberwithin{teo}{section}
\newtheorem{prop}[teo]{Proposition}
\newtheorem*{claim}{Claim}

\newtheorem{coro}[teo]{Corollary}
\theoremstyle{definition}%
\newtheorem{rmk}[teo]{Remark}%
\newtheorem{de}[teo]{Definition}%
\newtheorem{deprop}[teo]{Def./Prop.}
\raggedbottom

\author[J. Galindo Jim\'enez]{Jose Galindo Jim\'enez}
\address{Departament de Matem\`atiques, Universitat de Val\`encia, Avinguda Vicent Andr\'es Estell\'es 19, Burjassot, 46100, l'Horta Nord, Pa\'is Valenci\`a, Spain}
\email{jose.galindo(at)uv.es}

\title[Kulikov-Persson-Pinkham theorem via smoothing of dlt models]{Kulikov-Persson-Pinkham theorem via smoothing of dlt models}
\date{}
\thanks{The author has received funding from Basque Government through the BERC 2022-2025 program and BCAM Severo Ochoa 2023-2027 accreditation funded by MICIU/AEI/10.13039/501100011033.  A Catalan translation of this article can be found in
	\href{https://www.uv.es/gajijo/kulikov.catala.pdf}{this link.}
	}
\begin{document}
	
	\maketitle
	\begin{abstract}
		We give an alternative proof of the Kulikov-Persson-Pinkham Theorem for a K\"ahler degeneration of K-trivial smooth surfaces. After running the Minimal Model Program, the obtained minimal dlt model has mild singularities which we resolve via Brieskorn's simultaneous resolutions and toric resolutions.
	\end{abstract}
\section{Introduction}
In the late 1970s, Kulikov obtained a breakthrough in the study of degenerations and singularity theory. Given a semistable degeneration of $K3$ surfaces, he was able to construct a birational semistable minimal model.
\begin{teo}[Kulikov {\cite[Theorem I]{kulikov}}, Persson-Pinkham \cite{PP}]\label{KPP}
	Let $f:\mathcal{Y}\rightarrow \mathbb{D}$ be a semistable\footnote{	It is well known that by Mumford's Semistable Reduction Theorem (cf. \cite[Ch. 4]{mumford}) any flat proper degeneration of complex analytic spaces or schemes over characteristic zero fields can be arranged into a semistable model after finite surjective base change and birational modifications. Moreover, it may be constructed to be projective if the original one was projective.} morphism to a complex disk such that
	\begin{enumerate}
		\item $\mathcal{Y}_t$ is a smooth $K$-trivial surface for $t\in \mathbb{D}, t\neq0$ and
		\item all irreducible components of the special fiber $\mathcal{Y}_0=f^{-1}(0)$ are algebraic.
	\end{enumerate}
	Then there is a birationally equivalent semistable degeneration $f':\mathcal{X}\rightarrow \mathbb{D}$ such that $K_{\mathcal{X}}\sim_{f'}\mathcal{O}_{\mathcal{X}}$.
\end{teo}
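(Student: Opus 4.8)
The plan is to run a relative Minimal Model Program and then repair the resulting singularities by hand. First I would regard the reduced special fiber $\mathcal{Y}_0$ as a boundary, so that $(\mathcal{Y},\mathcal{Y}_0)$ is a log smooth, hence dlt, pair. Since $\mathcal{Y}_0$ is a fiber of $f$ we have $\mathcal{Y}_0\sim_f 0$, and therefore $K_{\mathcal{Y}}+\mathcal{Y}_0\sim_{f,\mathbb{Q}}K_{\mathcal{Y}}$, so running the $(K_{\mathcal{Y}}+\mathcal{Y}_0)$-MMP over $\mathbb{D}$ is the same as running the relative canonical program. Assumption (2) provides the projectivity needed to run the MMP relatively. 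Because the generic fiber is $K$-trivial it is not uniruled, so no Mori fiber space can occur and the program terminates at a dlt minimal model $(\mathcal{X},\mathcal{X}_0)$ with $K_{\mathcal{X}}+\mathcal{X}_0$ relatively nef.

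The next step is to make the relative canonical genuinely trivial. By relative abundance for dlt threefolds over the disk, $K_{\mathcal{X}}+\mathcal{X}_0$ is relatively semi-ample; as the generic fiber is $K$-trivial it is also relatively numerically trivial, and a semi-ample, relatively numerically trivial divisor over a curve is relatively $\mathbb{Q}$-trivial. Combined with $\mathcal{X}_0\sim_f 0$ this yields $K_{\mathcal{X}}\sim_{f,\mathbb{Q}}\OO_{\mathcal{X}}$, which I would upgrade to $K_{\mathcal{X}}\sim_f\OO_{\mathcal{X}}$ using that by adjunction $K_{\mathcal{X}}|_{\mathcal{X}_t}=K_{\mathcal{X}_t}$ is not merely torsion but honestly trivial for a $K$-trivial surface, forcing index one.

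What remains is that $\mathcal{X}$ is not smooth and $\mathcal{X}_0$ is not yet SNC, so $f$ is not semistable. Here I would classify the singularities of the dlt model: since $K_{\mathcal{X}}+\mathcal{X}_0\sim_{f,\mathbb{Q}}0$ the total space is Gorenstein with canonical singularities, the components of $\mathcal{X}_0$ are normal, and along the double curves and triple points the local model is forced to be either compound Du Val (the isolated ones) or toric. I would then resolve these crepantly in two stages. Brieskorn's simultaneous resolution handles the cDV loci: after a finite base change $\mathbb{D}'\to\mathbb{D}$ ramified at $0$, the family of rational double points appearing in the fibers admits a simultaneous crepant resolution. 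The remaining toric strata are resolved by explicit crepant toric subdivisions. Gluing these produces a smooth threefold with reduced SNC central fiber, i.e. a semistable model, on which the relative canonical stays trivial because every resolution used is crepant.

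I expect this last step to be the main obstacle. One must check that the dlt singularities really fall into these two tractable classes, that Brieskorn's theorem applies simultaneously along the double curves without spoiling the toric structure at the triple points, and that the two resolution procedures are compatible enough to glue into an honest semistable family. Verifying that the unavoidable base change preserves the relative triviality of $K$—which it does precisely because the resolutions are crepant—and that the final model $f'\colon\mathcal{X}\to\mathbb{D}'$ is birationally equivalent to the original degeneration, completes the argument.
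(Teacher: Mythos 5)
Your overall route---relative MMP, then crepant repair of the minimal dlt model via Brieskorn's simultaneous resolution plus toric resolutions after a base change---is the same as the paper's, but the step you compress into one sentence, the classification of the singularities of the dlt model, is asserted (``the local model is forced to be either compound Du Val\dots or toric'') rather than proved, and it is precisely the paper's main technical content (Proposition \ref{onlyduval}, Corollary \ref{step2}, Proposition \ref{1and2}). Moreover, the picture you assert is not the correct one: the minimal dlt model carries \emph{no} singularities along the double curves and triple points at all---$(\mathcal{X},\mathcal{X}_0)$ is snc in a neighborhood of the $1$-dimensional strata---and its only singularities are isolated Du Val points of the components $V_i$ lying in their interiors, over which $\mathcal{X}$ is terminal and Gorenstein (hence isolated cDV points). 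Establishing this requires genuine input that your sketch does not supply: adjunction via the different to produce dlt surface pairs $(V_i,D_i)$ with $K_{V_i}+D_i$ Cartier, integrality of discrepancies (Cartier-ness of $K_{\mathcal{X}}+\mathcal{X}_0$) to promote the dlt bound $>-1$ to $\geq 0$, the classification of canonical surface singularities to push $\Supp D_i$ into the smooth locus of $V_i$, and Kawamata's lemma that on a terminal Gorenstein $3$-fold every $\mathbb{Q}$-Cartier divisor is Cartier, which is what forces $\Sing(\mathcal{X})\subset\cup_i\Sing(V_i)$. (This is also where $\mathbb{Q}$-factoriality and index one are indispensable; see Remark \ref{numerically} for how the argument breaks for Enriques fibers.) The toric singularities you attribute to the dlt model in fact appear only \emph{after} the Brieskorn base change, which converts the snc local models $xy=t$ and $xyz=t$ into $A_{n-1}$ curves and $xyz=t^n$ points; these are then removed by Friedman's crepant toric resolution. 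Once the correct classification is in hand, your worry about gluing the two resolution procedures disappears: Brieskorn is applied near the isolated interior points and the toric subdivision near the strata, and these loci are disjoint.

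A second gap is the claim that ``Assumption (2) provides the projectivity needed to run the MMP relatively.'' Algebraicity of the irreducible components of $\mathcal{Y}_0$ does not make the total space projective, or even K\"ahler, over the disk, and the analytic relative MMP invoked here requires the K\"ahler hypothesis. This is exactly why Theorem \ref{complex} assumes $\mathcal{Y}$ K\"ahler and why Remark \ref{projectiveassumption} explicitly notes that running the MMP under the original hypotheses of Theorem \ref{KPP} has not been established (and that degenerations with non-algebraic, non-K\"ahler central fibers can fail to admit Kulikov models). So your argument, like the paper's, really proves the K\"ahler version of the statement; the hypothesis cannot be extracted for free from assumption (2).
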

More generally in the literature, a semistable degeneration $f:\mathcal{X}\rightarrow C$ with general fibers $\mathcal{X}_t$, such that $K_{\mathcal{X}_t}\sim_{\mathbb{Q}}\mathcal{O}_{\mathcal{X}_t}$, is called a \textit{Kulikov model} if $K_{\mathcal{X}}\sim_{\mathbb{Q},f}\mathcal{O}_{\mathcal{X}}$. An essential feature of Kulikov models is that they are classified based upon the dimension of the dual complex of the central fiber or, equivalently, the action of the monodromy acting on $H^2(\mathcal{X}_t)$ (cf. \cite[Theorem II]{kulikov} and \cite[Proposition 3.3.1]{persson}). Making use of this, they were studied extensively in the previous century (cf. \cite{persson} or \cite{friedman} among others). Moreover, their study influenced later developments in birational geometry, in particular the birth of the Minimal Model Program (MMP).\par
The original proof for $K3$ surfaces by Kulikov was relatively obscure\footnote{At the time of publishing, some were left wondering whether it was valid (see the review of his article in Mathematical Reviews 58, \# 2208).}. Following his ideas, Persson and Pinkham generalized the result with a clearer proof. The strategy relied on combinatorial arguments and complicated subtle complex-analytic (often non-algebraic) constructions such as \textit{generic contractions} and \textit{quasi-degenerations}. To avoid these we turn to the MMP.\par 
Fujino proved that for degenerations of $K$-trivial varieties of arbitrary dimension, the MMP runs, cf. \cite[Theorem 1.1]{fujino}. From a semistable model one obtains a \textit{minimal dlt model}\footnote{Model in the sense of the MMP, i.e., birationally equivalent variety with nef canonical divisor in the same class of singularities.}, essentially the weaker MMP notion of a semistable minimal model. Such a model has been further studied and has very interesting applications, for example in recent developments for the SYZ conjecture (cf. \cite{kollarxu} and \cite{nicaise} among others).\par
In particular, Koll\'ar, Laza, Sacc\`a and Voisin showed that a minimal dlt degeneration of hyper-K\"ahler manifolds serves as an n-dimensional analogue to a Kulikov model. They share many properties, most interestingly regarding the action of the monodromy and topology of the associated dual complexes, cf. \cite[Theorem 0.11]{KLSV}. \par 
This posed the question: Can one obtain honest Kulikov models from these dlt model? The complex analytic case was studied by Iskovkikh and Shokurov, cf. \cite[\S 7]{shokurov}, and the algebraic space case by Odaka. The first ones showed that one may run the MMP for a semistable 3-fold with fibers with non-negative Kodaira dimension and stay in the class of the so-called \textit{semistable singularities}. From this they conclude that the model is smooth with only Du with at most Du Val singularities in the interior of components. Then they smooth them taking the analytic \Q-factorialization. \begin{rmk}
	Semistable singularities are a special case of 3-fold singularities. They have good properties and admit a very explicit description (cf. \cite[\S 1]{shokurovsing}). Nonetheless, they do not appear in the common surveys on MMP (cf. \cite{kollar}, \cite{KM} or \cite{bchm}). As a consequence, they remain unbeknownst to many.
\end{rmk}
Opposed to this, we studied the minimal model using the standard description and properties of dlt singularities. These are defined in a more general setting and have been studied extensively. From this analysis, we recover Theorem \ref{KPP}.
\begin{teo}\label{complex}
	Let $f:\mathcal{Y}\rightarrow C$ be a proper flat dlt surjective morphism from a complex analytic space to $C$ a smooth curve. Assume that
	\begin{enumerate}
		\item  $\mathcal{Y}$ is (globally) $\mathbb{Q}$-factorial and K\"ahler, and
		\item  the general fibers of $f$, $\mathcal{Y}_t$ over closed points $t\in C$, are smooth surfaces with $K_{\mathcal{Y}_t}\sim \mathcal{O}_{\mathcal{Y}_t}$ outside of a finite set $C^0$.
	\end{enumerate}
	Then there exists a finite surjective base change $\pi:C'\rightarrow C$ and a birational map $p$ fitting a commutative diagram
	\[\begin{tikzcd}
		{\mathcal{X}} & {} & {\mathcal{Y}\times_CC'} \\
		& {C'}
		\arrow[dashed, "p", from=1-1, to=1-3]
		\arrow["{f'}"', from=1-1, to=2-2]
		\arrow["\pi_2", from=1-3, to=2-2]
	\end{tikzcd}\]
	such that $f':\mathcal{X}\rightarrow C'$ is a Kulikov model and $p$ is an isomorphism over $\pi^{-1}(C\backslash C^0)$. Here $\pi_2$ denotes the natural projection to $C'$.
\end{teo}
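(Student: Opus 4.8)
The plan is to reduce the statement to a local analysis over the finitely many bad points of $C^0$, run Fujino's relative MMP to reach a minimal dlt model with relatively trivial canonical class, and then remove the remaining singularities by explicit crepant resolutions that preserve this triviality and produce a reduced simple normal crossings central fiber. First I would invoke Fujino's MMP, \cite[Theorem 1.1]{fujino}, which applies precisely because $\mathcal{Y}$ is $\mathbb{Q}$-factorial, K\"ahler and $f$ is dlt. Running the MMP relative to $f$ on the pair $(\mathcal{Y},\mathcal{Y}_0^{\mathrm{red}})$ yields a minimal dlt model $g:\mathcal{X}'\to C$ with $K_{\mathcal{X}'}+\mathcal{X}_0'^{\,\mathrm{red}}$ nef over $C$. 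Since the general fibers satisfy $K_{\mathcal{Y}_t}\sim\mathcal{O}_{\mathcal{Y}_t}$, the relative canonical $K_{\mathcal{X}'/C}$ is numerically, hence (by relative base-point-freeness) $\mathbb{Q}$-linearly, trivial over $C$; every step of the program is an isomorphism over $C\setminus C^0$. This already provides the relative $K$-triviality demanded of a Kulikov model, and K\"ahlerness is retained because Fujino's program operates inside the K\"ahler analytic category; what is still missing is smoothness of the total space together with reducedness and the snc property of the central fiber.

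Next I would classify the singularities of $\mathcal{X}'$ using the standard structure theory of dlt pairs. The snc locus of $(\mathcal{X}',\mathcal{X}_0'^{\,\mathrm{red}})$ has complement of codimension at least two among the log canonical centers, so away from the double curves and triple points of the central fiber the total space has at worst canonical (compound Du Val) singularities, restricting on each surface component to Du Val (ADE) singularities, as is forced for a component of a $K$-trivial degeneration. Along the codimension-two double curves and the codimension-three triple points, the dlt condition forces the germs to be of quotient/toric type. Thus the remaining singularities split into a family of Du Val singularities in the interior of the components and toric singularities along the boundary strata.

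I would then resolve these two kinds of loci in a single compatible base change. In the interior, over a small disk about each point of $C^0$ we have a one-parameter family of Du Val fibers; after a finite base change $\pi:C'\to C$ ramified over $C^0$ --- the base change appearing in the statement --- Brieskorn's simultaneous resolution produces a resolution of the pulled-back family that restricts fiberwise to the minimal resolution of each Du Val fiber. Because Du Val singularities are crepant, this resolution is crepant and preserves $K_{\mathcal{X}/C'}\sim_{\mathbb{Q}}0$, while interior resolution does not disturb how components meet, keeping the central fiber reduced. Along the toric strata I would take a crepant toric resolution, which exists since the relevant cones are Gorenstein/log canonical and can be chosen to yield a reduced snc central fiber. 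Gluing the Brieskorn resolution and the toric resolution over their overlaps gives a smooth K\"ahler total space $\mathcal{X}$ with reduced snc central fiber and $K_{\mathcal{X}/C'}\sim_{\mathbb{Q}}\mathcal{O}_{\mathcal{X}}$, i.e.\ a Kulikov model $f'$, and the resulting $p$ is an isomorphism over $\pi^{-1}(C\setminus C^0)$ since all modifications sit over $C^0$.

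The hard part will be organizing these two resolutions into one coherent global process. Brieskorn's simultaneous resolution requires a base change dictated by the monodromy (Weyl-group) action on the vanishing cycles of the Du Val fibers, and one must select a single $C'$ that simultaneously trivializes this monodromy at all interior singularities while remaining fine enough for the toric resolutions of the boundary strata. The delicate technical heart is checking, on the overlap of the interior and boundary models, that the (possibly merely analytic, non-projective) simultaneous resolution is compatible with the divisorial toric resolution along the double curves without destroying crepancy or the reduced snc structure. I would treat this analytically-locally along each stratum, verify agreement on overlaps, and use the $\mathbb{Q}$-factorial K\"ahler hypotheses to globalize the local resolutions into the claimed model.
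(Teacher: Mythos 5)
Your Step 1 matches the paper, but there is a genuine gap at the heart of your Step 2, and it propagates into Step 3. You assert that the minimal dlt model may carry ``quotient/toric'' singularities along the double curves and triple points, and you plan to remove them by crepant toric resolutions chosen to keep the central fiber reduced and snc. The paper proves the opposite and much stronger statement, and needs it: by Proposition \ref{onlyduval} and Corollary \ref{step2} (packaged as Proposition \ref{1and2}), the minimal dlt model is already \emph{snc in a neighborhood of all 1-dimensional strata}, and its only singularities are isolated Du Val points in the interiors of the components $V_i$, with terminal total space. The mechanism is exactly what your sketch omits: relative $K$-triviality makes $K_{\mathcal{X}}+\mathcal{X}_t$ \emph{Cartier} (not merely $\mathbb{Q}$-Cartier), so discrepancies are integers and the dlt inequality $>-1$ upgrades to $\geq 0$; adjunction via the different pushes this to the surface components; and Kawamata's lemma (on a terminal Gorenstein 3-fold every $\mathbb{Q}$-Cartier divisor is Cartier) then forces the $V_i$ to be Cartier, hence snc-ness along the strata. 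Without this, your plan is not merely incomplete but unworkable as stated: a general dlt 3-fold germ along a stratum need not admit any crepant resolution at all, let alone one with reduced snc central fiber, and you give no argument that the germs you allow do. (Note also that your derivation of relative triviality from numerical triviality via ``relative base-point-freeness'' is exactly the step that fails for Enriques fibers, cf.\ Remark \ref{numerically}; the paper instead imports the integral triviality from Fujino, and genuinely needs the Cartier statement for the integrality of discrepancies.)

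The inverted order of operations is also why you run into the ``gluing'' problem you admit you cannot resolve. In the paper, toric singularities never exist on the minimal dlt model; they are \emph{created} by the Brieskorn base change, which turns the (previously snc) triple points into $xyz=t^n$ germs and the double curves into curves of $A_{n-1}$ singularities. These very special singularities are then resolved crepantly by Friedman's explicit toric argument, and the whole process is carried out \emph{inductively, one Du Val point at a time}, composing finite base changes --- so there is never any need to choose a single base change trivializing all monodromies at once, nor to glue an interior Brieskorn model to a separately constructed boundary model: each step is a global morphism on one space. Your ``delicate technical heart'' is thus an artifact of the wrong decomposition; with the paper's Step 2 in hand it disappears entirely.
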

\begin{rmk}
	Note that the hypotheses imply that the model has semistable singularities (compare with \cite[Definition 7.2]{shokurov}).
\end{rmk}
The proof of Theorem \ref{complex} is structured in 3 steps:
\begin{itemize}
	\item [\textbf{Step 1} ] Start with a dlt/semistable model of degenerating $K$-trivial surfaces and as in \cite[Theorem 1.1]{fujino} obtain a realtively $K$-trivial dlt model via the MMP.
	\item [\textbf{Step 2} ] Prove that this minimal dlt model is snc away from the intersection loci of special fibers. More specifically, the total space has at most terminal singularities above isolated canonical surface singularities in the interior of irreducible components of special fibers. (cf. Proposition \ref{1and2})
	\item [\textbf{Step 3} ] As canonical surface singularities are Du Val singularities, they can be resolved \textit{crepantly}, after finite surjective base change, using Brieskorn's resolutions and using toric resolutions, via an argument from Friedman.
\end{itemize}
\begin{rmk}\label{odaka}
	\begin{enumerate}[i.]
		\item In the Appendix to his unpublished article \cite{odaka}, Odaka gave a proof for algebraic spaces under slightly more general hypothesis, allowing non $\mathbb{Q}$-factorial singularities, using a similar strategy. Further he obtained a stronger result as in Step 3  he showed one may perform the simultaneous resolution without the base change, cf. \cite[Lemma A.5]{odaka}. For complex analytic spaces he observed that it should be possible to argue similarly though it is not clear whether the references he cites for this imply it directly, \cite[Lemma A.6] {odaka}.
		\item Step 2 was essentially proven in more generality by Nicaise, Xu and Yue in \cite[Theorem 4.5]{nicaise}. They showed that a (good) minimal dlt model $(\mathcal{X},D)$ of arbitrary dimension with $K_{\mathcal{X}}+D$ Cartier is snc along the 1-dimensional lc strata. In our exposition, the geometry of 3-dimensional singularities allows for a more straightforward argument, see Remark \ref{nicaise}.
	\end{enumerate}
\end{rmk}
\textbf{Acknowledgments.}
	The author is very grateful to Philip Engel and Evgeny Shinder for all their help and guidance. He is also thankful to Vyacheslav Shokurov  and Yuji Odaka for their helpful comments. Furthermore, he thanks Mattias Jonsson and Hyunsuk Kim for pointing out minor mistakes in the first preprint.
\section{Notation and preliminary results}\label{notation}
\subsection{Basic definitions of the Minimal Model Program}
We use the usual definitions and notation from \cite{KM} and \cite{kollar}.\par \smallskip
A \textit{pair} $(X,\Delta)$ consists of a normal complex analytic variety $X$ and a \Q-(Weil) divisor $\Delta $ on it. We consider pairs $(X,\Delta)$ over a smooth variety $S$ satisfying the conditions
\begin{enumerate}
	\item $X$ is normal proper variety that has a dualizing sheaf $\omega_{X/S}$ and
	\item let $\Delta=\sum a_i D_i$ be the irreducible decomposition then no $D_i$ has its support contained in $\Sing(X)$. 
\end{enumerate}
For a birational morphism $f:Y\rightarrow X$, with exceptional divisors $\{E_i\}_{i\in I}$ if $K_X+\Delta $ is \Q-Cartier one can write
\begin{equation}\label{canonicalbundleformula}K_{Y}+f^{-1}_*\Delta\sim_{\mathbb{Q}} f^*(K_X+\Delta )+\sum_{E_i \text{ exceptional}}a(X,\Delta, E_i)E_i,\end{equation}
where $a(X,E_i,\Delta)$ is called the \emph{discrepancy} of $E_i$.  Moreover, set $a(Y, \Delta, D)=-\coeff_D\Delta$ for non-exceptional divisors $D\subset Y$.\par 

Let $(X,\Delta)$ be a pair, for any birational morphism $f:Y\rightarrow X$ and any irreducible divisor $E\subset Y$ one says that\vspace{0.3 cm}\begin{center}
	$(X,\Delta )$ is $\begin{cases}
		
		\text{\textit{terminal}}\\
		\text{\textit{canonical}}\\
		\text{\textit{plt} or \textit{log terminal}}\\
		\text{\textit{log canonical} (lc)}\\
	\end{cases}$ if $a(X,\Delta, E)\begin{cases}
		>0\text{ for all } E \text{ exceptional},\\
		\geq 0\text{ for all } E \text{ exceptional},\\
		>-1\text{ for all } E \text{ exceptional},\\
		\geq-1\text{ for all } E.
	\end{cases}$\vspace{0.2cm}
\end{center}
If $\Delta=0$, then we will say that $X$ is terminal (resp. canonical, etc.) if $(X,0)$ is terminal (resp. canonical, etc). Locally, a point $(p\in X, \Delta)$ is terminal (resp. canonical, etc.) if there is an open neighborhood $p\in U$ with $(U, \Delta|_{U})$ terminal (resp. canonical, etc.). \par
\begin{enumerate}[i.]
	\item Let $(X,\Delta)$ be an lc pair, then an irreducible subvariety $Z\subset X$ is a \textit{log canonical center} or \textit{lc center} if there is a birational morphism $f:Y\rightarrow X$ and a divisor $E\subset Y$ such that $a(X,\Delta, E)=-1$ and $\overline{f(E)}^X=Z$. Here $\overline{(\cdot)}^X$ denotes the scheme-theoretic closure. More generally, for any divisor $E\subset Y$ such that $\overline{f(E)}^X\subseteq Z$ one says that $E$ is a \textit{divisor lying over $Z$}.
	\item We say that $(X,\Delta)$ is \textit{simple normal crossings} or \textit{snc} if $X$ is smooth and $\Delta$ has simple normal crossing support. The largest open set $X^{snc}\subset X$ such that $(X^{snc}, \Delta|_{X^{snc}})$ is an snc pair is called the \textit{simple normal crossing locus} or \textit{snc locus}. A flat proper morphism $f:X\rightarrow C$ is called \textit{semistable} if $(X,X_t)$ is snc for all $t$ closed point in $C$, where $X_t=f^{-1}(t)$ with reduced structure.
	\item A log canonical pair is called \textit{divisorial log terminal} or \textit{dlt} if none of the lc centers of $(X,\Delta)$ lie over $X\backslash X^{snc}$. Analogously, a morphism $f:X\rightarrow C$ is called \textit{dlt} if $(X,X_t)$ is dlt for all $t$ closed point in $C$.
	\item A variety $X$ is called (globally) \Q\textit{-factorial} if every Weil divisor on $X$ is \Q-Cartier. \item 
	A pair $f:(X,\Delta)\rightarrow S$ is called \textit{minimal} if $K_X+\Delta$ is $f$-nef, i.e., nef for every closed irreducible curve contained in a closed fiber of $f$.
	\item A \textit{birational map} $f:(X,\Delta_X)\dashrightarrow (Y,\Delta_Y)$ will be called crepant if $K_X+\Delta_X\sim f^*(K_Y+\Delta_Y)$ and $\Delta_Y=f_*\Delta_X$.
\end{enumerate}

\subsection{Dlt pairs and the different.}
We now recall some useful facts about dlt pairs and the different. In general, it will be of interest to have control of dlt pairs when $\Delta$ is reduced. For this reason, we draw attention to the following theorem.
\begin{teo}\cite[Theorem 4.16]{kollar}\label{dltpairs}
	Let $(X,\Delta)$ be a dlt pair and $V_1,...,V_r$ the irreducible divisors that appear in $\Delta$ with coefficient 1.
	\begin{enumerate}
		\item The $k$-codimensional lc centers of $(X,\Delta)$ are exactly the irreducible components of the various $V_{i_1}\cap...\cap V_{i_k}$.
		\item Every irreducible component  of $V_{i_1}\cap...\cap V_{i_k}$ is normal of pure codimension $k$.
		\item Let $Z\subset X$ be any lc center. Assume that $V_i$ is (\Q-)Cartier for some $i$ and $Z\nsubseteq V_i$. Then every irreducible component of $V_i|_Z$ is also (\Q-)Cartier.
	\end{enumerate}
\end{teo}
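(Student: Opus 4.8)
The plan is to reduce all three assertions to the simple normal crossing locus $X^{snc}$, where they become immediate local computations, and then to propagate them over the non-snc locus using normality and adjunction. Write $\Delta=\sum_i V_i+\Delta'$ with $\lfloor\Delta'\rfloor=0$. By the definition of dlt, $U:=X^{snc}$ is a dense open subset such that $(U,\Delta|_U)$ is snc and no lc center of $(X,\Delta)$ is contained in $X\setminus U$. For \textbf{part (1)}, on $U$ the $V_i$ are smooth and meet transversally, so a direct discrepancy computation for an snc pair shows that the lc centers of $(U,\Delta|_U)$ are exactly the irreducible components of the various $V_{i_1}\cap\dots\cap V_{i_k}$ inside $U$. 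If $Z$ is any lc center of $(X,\Delta)$, then $Z\not\subseteq X\setminus U$, so $Z\cap U$ is a nonempty (hence dense) lc center of $(U,\Delta|_U)$; since $Z$ is irreducible it is the closure of a single component of some $V_{i_1}\cap\dots\cap V_{i_k}\cap U$. Conversely, the generic point of any component of $V_{i_1}\cap\dots\cap V_{i_k}$ lies in $U$, where it is visibly a codimension-$k$ lc center. This identifies the lc centers as claimed.

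\textbf{Part (2)} is the crux. I would first settle $k=1$, i.e. show each $V_i$ is normal, via Serre's criterion $R_1+S_2$. The condition $S_2$ follows from the Cohen--Macaulayness of the underlying space of a dlt pair in characteristic zero (equivalently, from the relevant Kawamata--Viehweg vanishing) together with the adjunction exact sequence along $V_i$. For $R_1$, a codimension-one point $\eta$ of $V_i$ is a codimension-two point of $X$: if $\eta\in U$ then $V_i$ is smooth at $\eta$; if $\eta\notin U$ then $\overline{\{\eta\}}$ is not an lc center, and Szab\'o's local description of dlt singularities forces $V_i$ to be regular at $\eta$ anyway. With $V_{i_1}$ normal, adjunction produces a dlt pair $(V_{i_1},\Diff_{V_{i_1}}(\Delta-V_{i_1}))$ in which the traces $V_j|_{V_{i_1}}$ ($j\neq i_1$) occur with coefficient one; the intersections $V_{i_1}\cap\dots\cap V_{i_k}$ then become intersections of coefficient-one divisors on $V_{i_1}$, and both normality and purity of codimension follow by induction on the dimension. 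The Kollár--Shokurov connectedness theorem enters here to control how these restrictions decompose into irreducible components.

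For \textbf{part (3)}, since $Z$ is normal by (2) and $Z\not\subseteq V_i$, the restriction $V_i|_Z$ is a well-defined Weil divisor, and it is (\Q-)Cartier because $V_i$ is. Moreover $V_i|_Z$ is reduced (its coefficient along each trace is $1$, read off from the different), with irreducible components $W_j$ that are themselves lc centers by (1). Two distinct $W_j$ meet only in codimension $\ge 2$ of $Z$, and any such deeper intersection is again an lc center, hence meets $U$; near those points the snc structure exhibits each $W_j$ as a coordinate hyperplane, so each $W_j$ is (\Q-)Cartier there. Away from these loci a single $W_j$ passes through a given point, so locally $V_i|_Z=W_j$ and the (\Q-)Cartier property is inherited from $V_i|_Z$. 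Gluing these local statements gives that every component $W_j$ is (\Q-)Cartier.

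The main obstacle is \textbf{part (2)}: the normality statement genuinely requires the Cohen--Macaulay (i.e.\ $S_2$) property of dlt pairs and the connectedness theorem, and the inductive reduction via adjunction must be set up so that the different again carries the coefficient-one divisors one expects. Once each $V_i$ is known to be normal, the remaining parts reduce to a clean induction on dimension and to local analysis on the snc locus.
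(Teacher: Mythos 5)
The paper does not prove this statement at all: it is imported as background, with the citation \cite[Theorem 4.16]{kollar} standing in for the proof. So there is no internal argument to measure your proposal against; the relevant comparison is with the standard proof in Koll\'ar's book, whose overall shape (reduction to the snc locus, dlt adjunction via the different, induction on dimension, the connectedness theorem) your sketch does reproduce. However, at the three places where the theorem is actually hard, your sketch has genuine gaps rather than proofs.

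(a) In part (1), the converse direction rests on the assertion that the generic point of any irreducible component of $V_{i_1}\cap\dots\cap V_{i_k}$ lies in $U=X^{snc}$. For $k=1$ this is true because each $V_i$ is itself an lc center and dlt-ness forbids lc centers inside $X\setminus U$; but for $k\geq 2$ a component could a priori be contained in $X\setminus U$, and ruling this out is equivalent to knowing that the component is an lc center --- i.e.\ to the statement being proven. The non-circular route is precisely the adjunction induction you defer to part (2), combined with the nontrivial fact that lc centers of $(V_{i_1},\Diff_{V_{i_1}}(\Delta-V_{i_1}))$ are lc centers of $(X,\Delta)$. (b) In part (2), both halves of Serre's criterion are asserted where they need proof. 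The $R_1$ claim that ``Szab\'o's local description forces $V_i$ to be regular at $\eta$'' is, at a non-snc codimension-two point of $X$, essentially the statement that plt pairs have normal boundary, whose proof \emph{is} the Koll\'ar--Shokurov connectedness theorem applied on a log resolution --- so connectedness is needed here, not merely to ``control components'' afterwards. The $S_2$ claim also does not follow as stated: since $V_i$ need not be Cartier, the sequence $0\to\mathcal{O}_X(-V_i)\to\mathcal{O}_X\to\mathcal{O}_{V_i}\to 0$ with $\mathcal{O}_X(-V_i)$ merely reflexive (hence $S_2$ as a sheaf) gives only $\operatorname{depth}\mathcal{O}_{V_i}\geq 1$ at deep points, not $S_2$; this is exactly why the standard proof establishes normality via connectedness rather than via Serre's criterion. (c) In part (3), your gluing argument fails at points of $W_j\cap W_{j'}$ lying outside $U$: an lc center meets $U$ but is not contained in it, so the ``coordinate hyperplane'' argument covers only the part of the pairwise intersection locus inside $U$, and at the remaining points neither of your two local descriptions applies --- yet $\mathbb{Q}$-Cartierness at precisely such points is the content of (3). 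In short, the skeleton is the right one, but each of the three load-bearing steps is replaced by an appeal that either presupposes the conclusion or invokes a property (CM-ness, Szab\'o regularity, snc structure) in a situation where it is not available.
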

\begin{de}
	Let $(X,\Delta)$ be a pair and $V_i$, $i\in I$ the prime components of $\Delta$. A \textit{stratum} of $\Delta$ is a connected component of the  intersection $\Delta_J=\cap_{j\in J}V_j$ for some non-empty $J\subset I$. 
\end{de}
In particular, in the setting of the previous theorem, if $\Delta$ is reduced and effective, the irreducible components of each strata correspond to log canonical centers.\par
Working with divisors, it is always useful to have some form of adjunction formula, but singularities pose obstructions to it. Luckily, dlt pairs allow for a ``fix" of adjunction by adding a correcting term (also holds in a more general setting, see \cite[\S 4.1]{kollar}).
\begin{deprop}\cite[Rmk. 8.2]{fujinodlt}
	Let $(X, \Delta)$ be a dlt pair and $V$
	an irreducible component appearing with coefficient 1 in $\Delta$. Then there exists a unique \Q-divisor $\Diff_V(\Delta-V)$ on $V$, defined by the equation
	\[(K_X + \Delta)|_V= K_V + \Diff_V(\Delta-V),\]
	known as the \textit{different}. Moreover, $(V, \Diff_V(\Delta-V))$ is a dlt pair.
\end{deprop}
In particular, notice the different ``inherits'' the property of dlt-ness. Computing the different is generally not easy, but it can be under good hypotheses. One deduces easily from Theorem \ref{dltpairs} and \cite[Prop. 9.2]{fujinodlt} the following description of the different.
\begin{coro}\cite[Paragraphs 6 and 15]{kollarxu}\label{different}
	Let $(X,\Delta)$ be a dlt pair such that $\Delta$ is effective and reduced with irreducible components $V_i$ and $K_X+\Delta$ is (\Q-)Cartier. Let $D_i$ be the sum of codimension 2 strata of $\Delta$ supported on $V_i$. Then \[\Diff_{V_i}(\Delta-V_i)=D_i,\] and $D_i$ is (\Q-)Cartier. Consequently, the pair $(V_i, D_i)$ is dlt and satisfies the adjunction formula
	\[(K_X + \Delta)|_{V_i} = K_{V_i} + D_i.\]
\end{coro}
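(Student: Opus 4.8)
The plan is to identify the divisor $\Diff_{V_i}(\Delta-V_i)$ by computing its coefficient at the generic point of each prime divisor of $V_i$, since a Weil divisor on the normal variety $V_i$ (normality coming from Theorem~\ref{dltpairs}(2)) is determined by its behaviour at these codimension-$1$ points, equivalently at the codimension-$2$ points of $X$ lying on $V_i$. A prime divisor $Z\subset V_i$ is of exactly one of two types: either it is a codimension-$2$ stratum of $\Delta$, i.e.\ a component of some $V_i\cap V_j$, or it is not.

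First I would dispose of the stratum case. If $Z$ is a codimension-$2$ stratum, then by Theorem~\ref{dltpairs}(1) it is an lc center of the dlt pair $(X,\Delta)$, so by the very definition of dlt it cannot lie over $X\setminus X^{snc}$; hence near the generic point of $Z$ the space $X$ is smooth and exactly two components $V_i,V_j$ meet along $Z$ with normal crossings. For an snc pair adjunction is exact with trivial correction term, so locally $\Diff_{V_i}(\Delta-V_i)=V_j|_{V_i}$, which is reduced and carries $Z$ with coefficient $1$.

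The complementary case is the one I expect to be the main obstacle: ruling out any contribution to the different along a prime divisor $Z\subset V_i$ that is \emph{not} a stratum. Such a $Z$ is contained in no $V_j$ with $j\neq i$, so near its generic point $\eta_Z$ one has $\Delta=V_i$ and the only lc center through $\eta_Z$ is $V_i$ itself; a priori the different may still acquire a nonzero standard coefficient coming from the singularities of $X$ or of $V_i$ along $Z$, and excluding this is precisely where the hypothesis that $K_X+\Delta$ be (\Q-)Cartier enters. By the local description of the different for dlt pairs (\cite[Prop.~9.2]{fujinodlt}), the coefficient of $\Diff_{V_i}(\Delta-V_i)$ along $Z$ is a standard coefficient of the form $1-\tfrac1m$, where $m$ is the Cartier index of $K_X+\Delta$ at $\eta_Z$ and $Z\not\subseteq\Supp(\Delta-V_i)$; since $K_X+\Delta$ is (\Q-)Cartier at $\eta_Z$ we have $m=1$, and the coefficient vanishes. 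Combining the two cases yields $\Diff_{V_i}(\Delta-V_i)=\sum_W W=D_i$, the reduced sum of codimension-$2$ strata supported on $V_i$.

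It remains to record the Cartier property and to assemble the adjunction statement. Each prime component $W$ of $D_i$ is an irreducible component of $V_j|_{V_i}$ for some $j\neq i$; since $V_i$ is a codimension-$1$ lc center by Theorem~\ref{dltpairs}(1) and $V_i\nsubseteq V_j$, an application of Theorem~\ref{dltpairs}(3) with the lc center $V_i$ and the (\Q-)Cartier divisor $V_j$ (whose (\Q-)Cartierness is automatic, e.g.\ when $X$ is \Q-factorial) shows that every such $W$ is (\Q-)Cartier on $V_i$, and hence so is their sum $D_i$. Finally, the Def./Prop.\ of the different already furnishes that $(V_i,\Diff_{V_i}(\Delta-V_i))$ is dlt and that $(K_X+\Delta)|_{V_i}=K_{V_i}+\Diff_{V_i}(\Delta-V_i)$; substituting the identification $\Diff_{V_i}(\Delta-V_i)=D_i$ obtained above gives the dlt-ness of $(V_i,D_i)$ together with the adjunction formula $(K_X+\Delta)|_{V_i}=K_{V_i}+D_i$.
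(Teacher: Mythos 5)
Your skeleton --- compute the different at codimension-one points of $V_i$, handle strata via snc-ness (forced by dlt-ness at their generic points) and non-strata via Fujino's local description of the different, then obtain the (\Q-)Cartier claim from Theorem \ref{dltpairs}(3) --- is exactly the deduction the paper intends, since it simply cites Fujino's Prop.\ 9.2 together with Theorem \ref{dltpairs} and leaves the details to the reader. However, your non-stratum case contains a genuine error: you assert that ``since $K_X+\Delta$ is (\Q-)Cartier at $\eta_Z$ we have $m=1$.'' Being \Q-Cartier does \emph{not} mean having Cartier index $1$; it only means the index is finite. It is the honest Cartier hypothesis that forces $m=1$ and kills the coefficient $1-\tfrac{1}{m}$. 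The distinction is not cosmetic: take $X=\mathbb{A}^2/\{\pm 1\}$ and $\Delta=V$ the image of a line through the origin. Then $(X,V)$ is plt, hence dlt, $K_X+V$ is \Q-Cartier (of index $2$), there are no codimension-$2$ strata, and yet $\Diff_V(0)=\tfrac12\,[0]\neq 0$. So in the \Q-Cartier case the identity $\Diff_{V_i}(\Delta-V_i)=D_i$ is simply false, and your step is precisely where any proof must break down; the argument only goes through when $K_X+\Delta$ is genuinely Cartier, which is the version the paper actually uses later. Indeed, Remark \ref{numerically} stresses that the integrality bought by Cartier-ness is the essential point, and your ``$m=1$'' claim erases that distinction.

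Second, your proof that $D_i$ is (\Q-)Cartier is incomplete. Theorem \ref{dltpairs}(3) requires the divisor being restricted, $V_j$, to be (\Q-)Cartier, and you dismiss this with ``whose (\Q-)Cartierness is automatic, e.g.\ when $X$ is \Q-factorial.'' It is not automatic: neither \Q-factoriality of $X$ nor \Q-Cartierness of the components $V_j$ is among the hypotheses of the corollary as stated. This assumption must either be added to the statement or imported from the context of application (the paper supplies it in Proposition \ref{onlyduval}, where the components are assumed \Q-Cartier); as written, this clause of your argument rests on an unstated hypothesis.
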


\section{Proof of Kulikov-Persson-Pinkham Theorem.}\label{complexproof}
We want to follow the strategy laid out in the introduction. Step 1 is just running the MMP, to obtain a terminal and \Q-factorial analytic space $\mathcal{X}$ with $(\mathcal{X},\mathcal{X}_t)$ being dlt. Step 2 consists of studying the geometry of dlt pairs from the previous section to obtain an accurate description of its singularities. 
\begin{prop}\label{onlyduval}
	Let $X$ be 3-dimensional and $(X,\Delta)$ a dlt pair such that $\Delta$ is effective and reduced with irreducible components $V_i$ being \Q-Cartier\footnote{In the context of dlt morphisms and degenerations, this is sometimes referred to as \textit{vertically \Q-Cartier} (cf. \cite[Definition 1.1]{boucksom}) or \textit{good} dlt model (cf. \cite[\S 1.11]{nicaise}).}. Suppose that $K_{X}+\Delta$ is Cartier, then the $V_i$ have at most isolated canonical singularities away from the 1-dimensional strata.
\end{prop}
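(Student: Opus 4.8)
The plan is to reduce the statement to a local classification of surface singularities by means of the adjunction formula for the different. The point is that the hypotheses of Corollary \ref{different} are met verbatim: $(X,\Delta)$ is dlt with $\Delta$ effective, reduced and with components $V_i$, and $K_X+\Delta$ is Cartier. Applying it, for each $i$ the different $\Diff_{V_i}(\Delta-V_i)$ equals $D_i$, the reduced sum of the codimension-$2$ strata of $\Delta$ supported on $V_i$, the pair $(V_i,D_i)$ is again dlt, and one has the adjunction identity $(K_X+\Delta)|_{V_i}=K_{V_i}+D_i$. Since $X$ is $3$-dimensional, each $V_i$ is a normal (by Theorem \ref{dltpairs}(2)) surface, the codimension-$2$ strata supported on $V_i$ are curves, and these are precisely the $1$-dimensional strata of the statement; thus $\Supp(D_i)$ is exactly the locus we must excise.

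Next I would localize. Set $U_i=V_i\setminus\Supp(D_i)$. On $U_i$ the boundary of the restricted pair is empty, so $(U_i,0)$ is dlt; a dlt pair whose boundary has vanishing integral part is klt, hence $U_i$ has at most klt (log terminal) singularities. On the other hand $K_X+\Delta$ is Cartier, and the restriction of a Cartier divisor to the normal subvariety $V_i$ is Cartier, so by the adjunction identity $K_{V_i}+D_i$ is Cartier; restricting further to $U_i$, where $D_i$ vanishes, shows that $K_{U_i}$ is Cartier. Thus on $U_i$ we have a klt surface whose canonical divisor is Cartier, i.e.\ whose singularities are Gorenstein.

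Finally I would invoke the local theory of surface singularities. For any resolution $g:W\to U_i$ one writes $K_W=g^*K_{U_i}+\sum_j a_jE_j$; since $K_{U_i}$ is Cartier, both $K_W$ and $g^*K_{U_i}$ are integral, forcing every discrepancy $a_j$ to be an integer. Klt gives $a_j>-1$, so in fact $a_j\geq 0$, i.e.\ $U_i$ is canonical, and canonical surface singularities are exactly the Du Val singularities. That the singularities are isolated is automatic: $V_i$ is normal of dimension $2$, so $\Sing(V_i)$ has codimension $\geq 2$ in $V_i$, hence is a finite set of points, and the same holds on the open subset $U_i$.

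The main obstacle, and really the crux of the argument, is the passage from the dlt bound on discrepancies ($>-1$) to the canonical bound ($\geq 0$): this fails for klt surfaces in general, for instance cyclic quotient singularities of higher index, and is rescued here only by the Gorenstein hypothesis, namely that $K_X+\Delta$ is Cartier, which forces the discrepancies to be integral through adjunction. Everything else amounts to careful bookkeeping of where the different $D_i$ is supported and a routine appeal to normality for isolatedness.
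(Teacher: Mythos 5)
Your proof is correct and takes essentially the same route as the paper: both apply Corollary \ref{different} to obtain the dlt pair $(V_i,D_i)$ with $K_{V_i}+D_i$ Cartier, and both exploit the resulting integrality of discrepancies to upgrade the log terminal bound $>-1$ to the canonical bound $\geq 0$, with isolatedness coming from normality of the surfaces $V_i$. The only cosmetic difference is that you localize to $U_i=V_i\setminus\Supp D_i$ and use that a dlt pair with zero boundary is klt, whereas the paper argues directly with divisors lying over $Z=V_i\setminus V_i^{snc}$ and additionally observes (beyond what the statement requires, but used later) that $\Supp D_i$ lies in the smooth locus of $V_i$.
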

\begin{proof}
	Let $D_i$ be the sum of 1-dimensional strata supported on $V_i$. By Theorem \ref{dltpairs}, all $V_i$ are normal, hence have at most isolated singularities, and so are the irreducible components of $D_i$, so these are smooth curves.\par 
	By Corollary \ref{different} $(V_i, D_i)$ is dlt and $K_{V_i}+D_i$ is Cartier. By definition of dlt pair, there exists a closed subset $Z:=V_i\backslash V_i^{snc}$ such that any divisor over $Z$ has discrepancy $>-1$. Since $K_{V_i}+D_i$ is Cartier, by formula (\ref{canonicalbundleformula}), discrepancies are integers and thus the discrepancy must be $\geq 0$.\par
	$Z$ cannot contain neither the 0-dimensional strata of $D_i$ nor an irreducible curve supported on $D_i$, since these are log canonical centers by Theorem \ref{dltpairs}. Suppose there is a closed point $p\in \Supp D_i\cap Z$. Since the discrepancy is $\geq 0$, it is canonical, hence regular in $V_i$ by \cite[Theorem 2.29]{kollar}. Thus, $\Supp D_i$ is in the smooth locus of $V_i$. Away from it, one has at most canonical singularities.
\end{proof}
\begin{coro}\label{step2}
	Let $(X,\Delta)$ be a dlt pair such that $X$ is 3-dimensional and Gorenstein and $\Delta$ is effective and reduced with irreducible components $V_i$ being \Q-Cartier. Suppose $K_X+\Delta$ is Cartier and $X\backslash\Supp \Delta\subset X^{snc}$. Then \[X\backslash X^{snc}=\cup_i\Sing(V_i),\] where $\Sing(V_i)$ consists only of isolated canonical singularities in the interior of $V_i$ and $X$ is terminal. In particular, $(X,\Delta)$ is snc in an open neighborhood of the 1-dimensional strata.
\end{coro}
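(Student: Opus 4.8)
The plan is to read off the statement from Proposition \ref{onlyduval} together with two soft facts about Gorenstein terminal $3$-fold singularities. Proposition \ref{onlyduval} already gives that the $V_i$ are smooth along the $1$-dimensional strata and carry only isolated canonical singularities in their interiors, so $\Sing(V_i)$ is a finite set of interior points; this is exactly the description claimed, once the displayed equality is in place. First I would prove that $X$ is terminal by comparing discrepancies. For any exceptional divisor $E$ over $X$ one has $a(X,0,E)=a(X,\Delta,E)+\mathrm{mult}_E f^*\Delta\geq a(X,\Delta,E)$ because $\Delta$ is effective, and since $(X,\Delta)$ is dlt with $K_X+\Delta$ Cartier the $a(X,\Delta,E)$ are integers $>-1$, hence $\geq 0$; thus $X$ is at worst canonical. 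To upgrade to terminal, suppose $a(X,0,E)=0$. Then $a(X,\Delta,E)=0$ and $\mathrm{mult}_E f^*\Delta=0$, so the centre of $E$ is not contained in $\Supp\Delta$. By hypothesis $X\setminus\Supp\Delta\subseteq X^{snc}$, so this centre lies in the smooth locus of $X$, where every exceptional discrepancy is $\geq 1$; this contradiction shows $a(X,0,E)\geq 1$ for all exceptional $E$, i.e.\ $X$ is terminal. Being Gorenstein, $X$ then has only finitely many singular points, each an isolated cDV point.

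Next I would establish the displayed equality. The inclusion $\cup_i\Sing(V_i)\subseteq X\setminus X^{snc}$ is immediate, since $(X,\Delta)$ being snc at a point forces each component through it to be smooth there. For the reverse inclusion take $p\in X\setminus X^{snc}$; by $X\setminus\Supp\Delta\subseteq X^{snc}$ we have $p\in V_i$ for some $i$, and it suffices to show that if every component through $p$ is smooth at $p$ then $(X,\Delta)$ is already snc at $p$. If $p$ lies in the interior of a single component, then locally $\Delta=V_i$ is Cartier (as both $K_X$ and $K_X+\Delta$ are), and a smooth Cartier divisor forces its ambient space to be regular; hence $(X,\Delta)$ is snc at $p$. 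The remaining, and decisive, case is when $p$ lies on a stratum.

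The crux is to show that $X$ is smooth along the strata. Let $q$ lie on a stratum, so the components $V_i,\dots$ through $q$ are all smooth at $q$ by Proposition \ref{onlyduval}. If $X$ were singular at $q$, then $q$ is an isolated cDV point, and here I would invoke that such a Gorenstein terminal $3$-fold singularity has torsion-free local class group; consequently the $\mathbb{Q}$-Cartier divisor $V_i$ is in fact Cartier at $q$, and a smooth Cartier divisor forces $X$ to be smooth at $q$ — a contradiction. Hence $X$ is smooth at every stratum point. The components through such a point are then smooth (so Cartier) divisors in a smooth $3$-fold, and because the different $\Diff_{V_i}(\Delta-V_i)=D_i$ is reduced by Corollary \ref{different}, they meet transversally; thus $(X,\Delta)$ is snc in a neighbourhood of the $1$-dimensional strata. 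This both completes the reverse inclusion — no stratum point lies in $X\setminus X^{snc}$, so each such point is an interior singular point of some $V_i$ — and yields the ``in particular'' statement.

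I expect the main obstacle to be exactly this last step: excluding singular points of $X$ on the strata. Absent the $\mathbb{Q}$-Cartier hypothesis the conifold $\{xy=zw\}$, whose two smooth non-$\mathbb{Q}$-Cartier planes meet along a line through the node, is a genuine counterexample; so the argument must use the $\mathbb{Q}$-factoriality of the individual components together with the torsion-freeness of the class group of an isolated cDV point. Pinning down and citing this torsion-freeness (e.g.\ via the parafactoriality of isolated Cohen--Macaulay singularities of dimension $\geq 3$) is the one genuinely non-formal input, and it is precisely where the geometry of $3$-dimensional singularities is used.
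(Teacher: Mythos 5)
Your proposal follows essentially the same route as the paper's proof: terminality of $X$ via a comparison of discrepancies of $(X,\Delta)$ and $(X,0)$, then the fact that a $\mathbb{Q}$-Cartier Weil divisor through a Gorenstein terminal (i.e.\ isolated cDV) $3$-fold point is Cartier, and finally that a Cartier divisor which is smooth at a point forces the ambient space to be regular there. The torsion-freeness of the local class group that you invoke is precisely Kawamata's Lemma, \cite[Lemma 5.1]{factorial}, which is exactly what the paper cites; the paper applies it globally while you apply it at points of the strata, a cosmetic difference. Your extra details (transversality of the components from reducedness of the different, and the conifold $\{xy=zw\}$ as the reason the $\mathbb{Q}$-Cartier hypothesis on the $V_i$ cannot be dropped) are correct and make explicit steps the paper leaves implicit. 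Two points, however, need repair.

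First, your canonicity step misquotes the definition of dlt: it is \emph{not} true that dlt-ness gives $a(X,\Delta,E)>-1$ for all exceptional $E$ --- that is plt. A dlt pair controls only divisors lying over $X\setminus X^{snc}$; already for $X$ smooth and $\Delta$ snc, blowing up a $1$-dimensional stratum of $\Delta$ gives $a(X,\Delta,E)=1-2=-1$. The fix is the same localization you use later in your terminality upgrade: if $E$ does \emph{not} lie over $X\setminus X^{snc}$, the generic point of its centre lies in $X^{snc}$, where $X$ is smooth, so $a(X,0,E)\geq 1$ outright; if $E$ does lie over $X\setminus X^{snc}$, then dlt-ness plus Cartier-ness of $K_X+\Delta$ give the integrality bound $a(X,\Delta,E)\geq 0$, and since $X\setminus X^{snc}\subseteq \Supp\Delta$ the centre of $E$ is contained in $\Supp\Delta$, so $\mathrm{mult}_E f^*\Delta>0$ and hence $a(X,0,E)>0$. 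This two-case argument is how the paper phrases it, via \cite[Lemma 2.27]{KM}. Second, your suggested source for the key non-formal input is wrong: ``parafactoriality of isolated Cohen--Macaulay singularities of dimension $\geq 3$'' is false --- the ordinary double point $\{xy=zw\}$ is an isolated hypersurface $3$-fold singularity whose punctured spectrum has Picard group $\mathbb{Z}\neq 0$ (parafactoriality only starts in dimension $\geq 4$ for local complete intersections). What is true, and suffices, is torsion-freeness of the local class group at an isolated cDV point: a $\mathbb{Q}$-Cartier Weil divisor there is Cartier, since the associated index-one cyclic cover is \'etale over the punctured neighbourhood and the link of an isolated $3$-fold hypersurface singularity is simply connected by Milnor's theorem. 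So at this step you should simply cite \cite[Lemma 5.1]{factorial}, as the paper does, rather than argue via parafactoriality. With these two corrections your proof is complete and coincides with the paper's.
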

\begin{proof}
	By the previous proposition, it suffices to prove that the $V_i$ are Cartier, as in this case, a non-regular point of $X$ will also be a non-regular point of $V_i$.\par
	We claim that $X$ is terminal. Since $(X,\Delta)$ is dlt, any divisor $E$ not lying over the snc locus has $a(E,X,\Delta)>-1$. However, $K_X+\Delta$ is Cartier and thus, as in the previous proposition, $a(E,X,\Delta)\geq0$. Then using that $X\backslash\Supp \Delta\subset X^{snc}$ by \cite[Lemma 2.27]{KM} $a(E,X,0)>0$.\par
	Since $X$ is 3-dimensional, terminal and Gorenstein, a lemma from Kawamata (cf. \cite[Lemma 5.1]{factorial}) implies that every \Q-Cartier divisor on $X$ is Cartier. Hence the $V_i$ are Cartier and $\Sing(X)\subset \cup_i \Sing(V_i)$ and thus $X\backslash X^{snc}=\cup_i \Sing(V_i)$.
\end{proof}
\begin{rmk}\label{nicaise}
	Nicaise, Xu and Yue proved the last assertion of this corollary (which is the main point) in \cite[Theorem 4.5]{nicaise} for arbitrary dimension. The proof differs in how to show that the $V_i$ are Cartier around the 1-dimensional strata. They do this with a sophisticated argument, but in the 3-dimensional setting, it is more straightforward to apply Kawamata's Lemma.
\end{rmk}
\begin{prop}\label{1and2}
	Let $f:\mathcal{Y}\rightarrow C$ be a proper flat dlt surjective morphism from a complex analytic space to $C$, a smooth curve. Assume that
	\begin{enumerate}
		\item $\mathcal{Y}$ is $\mathbb{Q}$-factorial and K\"ahler and,
		\item the general fibers of $f$, $\mathcal{Y}_t$ over closed points $t\in C$, are smooth surfaces with $K_{\mathcal{Y}_t}\sim \mathcal{O}_{\mathcal{Y}_t}$ outside of a finite set $C^0$.
	\end{enumerate}
	Then there exists a birational morphism $p:\mathcal{Y}\rightarrow \mathcal{X}$ fitting a commutative diagram
	\[\begin{tikzcd}
		{\mathcal{Y}} && {\mathcal{X}} \\
		& {C}
		\arrow["{p}", from=1-1, to=1-3]
		\arrow["{f}"', from=1-1, to=2-2]
		\arrow["{f'}", from=1-3, to=2-2]
	\end{tikzcd}\]
	such that $p$ is an isomorphism over $C\backslash C^0$ and 
	\begin{enumerate}
		\item $\mathcal{X}$ is terminal, \Q-factorial and $K_{\mathcal{X}}\sim_f \mathcal{O}_{\mathcal{X}}$.
		\item $(\mathcal{X},\mathcal{X}_t)$ is snc in an open neighborhood of the 1-dim. strata of $\mathcal{X}_t$, for all $t\in C$ closed point.
		\item The irreducible components $V_{i,t}$ of $\mathcal{X}_t$ have at most isolated canonical singularities and $$\mathcal{X}\backslash\mathcal{X}^{snc}= \cup_{i,t}\Sing(V_{i,t}).$$
	\end{enumerate}
\end{prop}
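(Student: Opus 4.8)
The plan is to produce $\mathcal{X}$ in two stages matching \textbf{Step 1} and \textbf{Step 2} of the introduction: first run the relative MMP to obtain the model, then read off the local structure of its singular fibers as a direct application of Corollary \ref{step2}. The point is that once the model has been constructed with the right formal properties, assertions (2) and (3) are immediate, and even the terminality in (1) falls out of the singularity analysis.

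For \textbf{Step 1} I would run the $K_{\mathcal{Y}}$-MMP over $C$. Over $C\backslash C^0$ the fibers are smooth $K$-trivial surfaces, hence minimal, so $K_{\mathcal{Y}}$ is already $f$-nef there and every extremal contraction or flip is supported on fibers over the finite set $C^0$; in particular the resulting birational contraction is an isomorphism over $C\backslash C^0$. By \cite[Theorem 1.1]{fujino}, which applies thanks to the K\"ahler and $\mathbb{Q}$-factorial dlt hypotheses, the program terminates in a relative good minimal model with $\mathcal{X}$ still $\mathbb{Q}$-factorial, $(\mathcal{X},\mathcal{X}_t)$ dlt, and $K_{\mathcal{X}}\sim_{\mathbb{Q},f}\mathcal{O}_{\mathcal{X}}$. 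Because the general fibers satisfy $K_{\mathcal{Y}_t}\sim\mathcal{O}_{\mathcal{Y}_t}$ (honest, not merely torsion, triviality), the relative $\mathbb{Q}$-linear triviality upgrades to $K_{\mathcal{X}}\sim_f\mathcal{O}_{\mathcal{X}}$. This yields all of part (1) except terminality.

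Next I would check the hypotheses of Corollary \ref{step2} after localizing $C$ around a single $t_0\in C^0$ and setting $\Delta=\mathcal{X}_{t_0}$, the reduced special fiber. Then $\Delta$ is reduced and effective, its components $V_i$ are $\mathbb{Q}$-Cartier since $\mathcal{X}$ is $\mathbb{Q}$-factorial, and $(\mathcal{X},\Delta)$ is dlt. The relation $K_{\mathcal{X}}\sim_f\mathcal{O}_{\mathcal{X}}$ means $K_{\mathcal{X}}\sim f^{*}B$ for a Cartier divisor $B$ on the smooth curve $C$, so $K_{\mathcal{X}}$ is Cartier; as $\mathcal{X}_{t_0}=f^{*}(t_0)$ is Cartier as well, $K_{\mathcal{X}}+\Delta=f^{*}(B+t_0)$ is Cartier. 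Since $\mathcal{X}$ is Cohen--Macaulay (being the underlying space of a dlt pair) with $K_{\mathcal{X}}$ Cartier, it is Gorenstein. Finally, after localizing the only point of $C^0$ is $t_0$, so $\mathcal{X}\backslash\Supp\Delta=f^{-1}(C\backslash\{t_0\})$ fibers in smooth $K$-trivial surfaces; a smooth fiber is a smooth Cartier divisor, so $\mathcal{X}$ is smooth along it and $\mathcal{X}\backslash\Supp\Delta\subset\mathcal{X}^{snc}$.

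With these verified, Corollary \ref{step2} applies and gives at once that $\mathcal{X}$ is terminal (completing part (1)), that $(\mathcal{X},\mathcal{X}_{t_0})$ is snc in a neighborhood of the $1$-dimensional strata (part (2)), and that $\mathcal{X}\backslash\mathcal{X}^{snc}=\cup_i\Sing(V_{i,t_0})$ consists of isolated canonical singularities in the interiors of the components (part (3)); ranging over the finitely many $t_0\in C^0$ assembles the global statement. I expect the main obstacle to lie entirely in \textbf{Step 1}: guaranteeing that the analytic/K\"ahler MMP runs and terminates with the stated properties, in particular that the birational contraction is an honest morphism $p:\mathcal{Y}\rightarrow\mathcal{X}$ and that the relative $\mathbb{Q}$-triviality of $K_{\mathcal{X}}$ can be sharpened to linear triviality $K_{\mathcal{X}}\sim_f\mathcal{O}_{\mathcal{X}}$. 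By contrast, the singularity description in \textbf{Step 2} is a formal consequence of the adjunction and discrepancy arguments already packaged in Proposition \ref{onlyduval} and Corollary \ref{step2}.
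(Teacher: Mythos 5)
Your proposal is correct and follows essentially the same route as the paper's own proof, step for step: run the relative MMP (observing that curves in the $K$-trivial general fibers intersect $K_{\mathcal{Y}}$ trivially, so every contraction or flip lies over $C^0$ and $p$ is an isomorphism over $C\backslash C^0$), obtain $K_{\mathcal{X}}\sim_f\mathcal{O}_{\mathcal{X}}$ via Fujino's argument, then localize near each $t_0\in C^0$ and apply Corollary \ref{step2} to get terminality, the snc statement near the 1-dimensional strata, and the description of $\mathcal{X}\backslash\mathcal{X}^{snc}$ all at once. The only differences are cosmetic: the paper invokes the K\"ahler analytic MMP (\cite[Theorem 1.1]{horing}) for existence and termination, reserving \cite[Theorem 1.1]{fujino} for the upgrade to relative $K$-triviality, while your explicit verification of the Gorenstein hypothesis (Cohen--Macaulayness of dlt spaces plus $K_{\mathcal{X}}$ Cartier) and of $\mathcal{X}\backslash\Supp\mathcal{X}_{t_0}\subset\mathcal{X}^{snc}$ spells out what the paper leaves implicit.
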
 
\begin{proof}
	\textbf{Step 1.} The MMP runs \cite[Theorem 1.1]{horing} and one obtains $f':\mathcal{X}\rightarrow C$ a dlt morphism with $\mathcal{X}$ \Q-factorial and $K_{\mathcal{X}}$ being $f'$-nef. As in \cite[Theorem 1.1]{fujino}, one has that $K_{\mathcal{Y}_t}\sim \mathcal{O}_{\mathcal{Y}_t}$ for a general fiber implies that $K_{\mathcal{X}}\sim_{f'} \mathcal{O}_{\mathcal{X}}$.\par
	For any closed fiber $\mathcal{X}_t$, by adjunction (cf. \cite[\S 4.1]{kollar}) one has
	\begin{equation*}
		\mathcal{O}_{\mathcal{X}_t}\sim (K_{\mathcal{X}}+\mathcal{X}_t)|_{\mathcal{X}_t}=K_{\mathcal{X}_t}.
	\end{equation*}
	Moreover, a general fiber $\mathcal{Y}_t$ is $K$-trivial so by adjunction an arbitrary divisor from $K_{\mathcal{Y}}$ induces a principal divisor on $\mathcal{Y}_t$ and so $|K_{\mathcal{Y}}|$ consists of a linear combination divisors on the special fibers. So as $K_{\mathcal{Y}}$ intersects trivially with any curve of a general fiber $\mathcal{Y}_t$, they cannot be contracted by any step of the MMP. Hence, $p$ is an isomorphism over $C\backslash C^0$.\par
	\textbf{Step 2.} $\mathcal{X}$ is relatively $K$-trivial, so both $K_{\mathcal{X}}$ and $K_{\mathcal{X}}+\mathcal{X}_t$ are Cartier. Moreover, every divisor is \Q-Cartier by \Q-factoriality. Thus away from other singular fibers in some (Zariski) neighborhood $\mathcal{U}\subset\mathcal{X}$ of $\mathcal{X}_t$ the  pair $(\mathcal{U}, \mathcal{X}_t)$ satisfies the assumptions of Corollary \ref{step2} which yields the desired result.
\end{proof}
From here, one deduces that the only obstruction to obtaining an honest Kulikov model after running the MMP is the canonical singularities of the special fibers. These are exactly Du Val singularities, cf. \cite[Theorem 4.5]{KM} and admit a simultaneous resolution after finite surjective base change. However, the base change produces toric singularities along the intersection loci, using a trick from Friedman these may be resolved crepantly.
\begin{proof}[Proof of Theorem \ref{complex}]
	After applying Proposition \ref{1and2}, it suffices to show we can resolve crepantly all the isolated Du Val singularities on the minimal dlt model.
	\par \textbf{Step 3.} Without loss of generality, assume there is a single (possibly) singular fiber which we call $\mathcal{Y}_0$. Pick a Du Val singularity $x\in V_i$ in an irreducible component of $\mathcal{Y}_0$. There is a neighborhood $x\in \mathcal{U}\subseteq \mathcal{Y}$ for which there are only Du Val singularities. Hence by \cite{brieskorn} there exists a finite ramified surjective base change $\pi_0:\tilde{C}\rightarrow C$ giving a Cartesian square
	\[\begin{tikzcd}
		{\mathcal{Z}} & \mathcal{Y} \\
		{\tilde{C}} & C
		\arrow["{g}", from=1-1, to=1-2]
		\arrow["{h}"', from=1-1, to=2-1]
		\arrow["\lrcorner"{anchor=center, pos=0.125}, draw=none, from=1-1, to=2-2]
		\arrow["f_0", from=1-2, to=2-2]
		\arrow["{\pi_0}"', from=2-1, to=2-2]
	\end{tikzcd}\]
	that admits a simultaneous resolution $q$
	\[\begin{tikzcd}
		{\tilde{\mathcal{Z}}} & {\mathcal{V}\subset \mathcal{Z}} & x\in \mathcal{U}\subset \mathcal{Y} \\
		{\tilde{C}} & {\tilde{C}} & C.		
		\arrow["{q}", from=1-1, to=1-2]
		\arrow["{\tilde{h}}"', from=1-1, to=2-1]
		\arrow["{g}", from=1-2, to=1-3]
		\arrow["{h}"', from=1-2, to=2-2]
		\arrow["\lrcorner"{anchor=center, pos=0.125}, draw=none, from=1-2, to=2-3]
		\arrow["f_0", from=1-3, to=2-3]
		\arrow["{=}"', from=2-1, to=2-2]
		\arrow["{\pi_0}"', from=2-2, to=2-3]
	\end{tikzcd}\]
	Notice that $\mathcal{Z}$ is $h$-trivial as the relative canonical divisor does not change for a finite surjective base change. Moreover, $q$ gives a minimal resolution fiberwise, therefore $q$ is a birational morphism and an isomorphism in codimension 1, thus $q^*K_{\mathcal{Z}}\sim K_{\tilde{\mathcal{Z}}}$. Globally we obtain
	\[\begin{tikzcd}
		{\tilde{\mathcal{Z}}} & {\mathcal{Z}} & \mathcal{Y} \\
		{\tilde{C}} & {\tilde{C}} & C
		\arrow["{q}", from=1-1, to=1-2]
		\arrow["{\tilde{h}}"', from=1-1, to=2-1]
		\arrow["{g}", from=1-2, to=1-3]
		\arrow["{h}"', from=1-2, to=2-2]
		\arrow["\lrcorner"{anchor=center, pos=0.125}, draw=none, from=1-2, to=2-3]
		\arrow["f_0", from=1-3, to=2-3]
		\arrow["{=}"', from=2-1, to=2-2]
		\arrow["{\pi_0}"', from=2-2, to=2-3]
	\end{tikzcd}.\]
	Now $\tilde{\mathcal{Z}}$ may have singularities arising from the effect of the base change in the lc locus.
	\begin{claim}
		There exists a crepant birational morphism over $\tilde{C}$
		\[\begin{tikzcd}
			{\mathcal{Z}'} && {\tilde{\mathcal{Z}}} \\
			& {\tilde{C}}
			\arrow["{j}", from=1-1, to=1-3]
			\arrow["{h'}"', from=1-1, to=2-2]
			\arrow["{\tilde{h}}", from=1-3, to=2-2]
		\end{tikzcd}\]
		such that it is an isomorphism outside of a neighborhood $\mathcal{V}_0$ of the intersection locus of $\tilde{\mathcal{Z}_0}$; it is a log resolution of singularities over $\mathcal{V}_0$ and $h':\mathcal{Z}'\rightarrow \tilde{C}$ is a flat, proper morphism. 
	\end{claim}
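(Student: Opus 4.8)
The plan is to resolve crepantly the singularities that the ramified base change $\pi_0$ has created, which are concentrated along the intersection locus of $\tilde{\mathcal{Z}}_0$ and are of toric type, and then to patch the local resolutions together. First I would localize the problem: by Proposition \ref{1and2} the only non-snc points of $\mathcal{Y}$ are the isolated Du Val singularities in the interior of the components, and these are disjoint from the intersection locus, so $\mathcal{Y}$ (hence $\mathcal{Z}$) is snc in a neighborhood of the whole intersection locus. Consequently $q$ is an isomorphism over a neighborhood $\mathcal{V}_0$ of the $0$- and $1$-dimensional strata of $\tilde{\mathcal{Z}}_0$, and $\tilde{\mathcal{Z}}$ is smooth everywhere else, so all remaining singularities lie inside $\mathcal{V}_0$. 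There the snc structure provides explicit local models: along a double curve (away from triple points) $f$ is given by $xy$, so after the base change $t=s^k$ the total space is analytically $\{xy=s^k\}\times\mathbb{C}$, a trivial family of $A_{k-1}$ surface singularities, while at a triple point $f=xyz$ yields the $3$-fold $\{xyz=s^k\}$. All of these are Gorenstein (hypersurface) toric singularities whose singular loci are exactly the double curves.

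I would then construct the resolution locally and glue. Each of these Gorenstein toric $3$-fold singularities admits a smooth crepant resolution: the $A_{k-1}$ factor has the unique minimal (hence crepant) resolution, and $\{xyz=s^k\}$ corresponds under the Gorenstein condition to a lattice triangle, which as a $2$-dimensional lattice polytope carries a unimodular triangulation and therefore a smooth crepant toric resolution; in each case the central fiber is again snc and the morphism to the $s$-disk is flat. To obtain a global $j:\mathcal{Z}'\rightarrow\tilde{\mathcal{Z}}$ I would invoke the trick of Friedman: the resolution along a double curve is canonical because the $A_{k-1}$ minimal resolution is unique, and any unimodular triangulation chosen at a triple point restricts on each edge of the triangle to the subdivision into $k$ unit segments forced by the adjacent double curve. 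Hence the local resolutions automatically agree along the double curves joining the triple points and patch to a crepant birational morphism that is an isomorphism outside $\mathcal{V}_0$.

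Finally I would check the asserted properties. Crepancy is a local condition on $\tilde{\mathcal{Z}}$ and holds for each toric piece, giving $K_{\mathcal{Z}'}\sim j^*K_{\tilde{\mathcal{Z}}}$; over $\mathcal{V}_0$ the map $j$ is a log resolution by construction and it is an isomorphism elsewhere; and $h'=\tilde{h}\circ j$ is proper as a composition of proper morphisms. For flatness of $h'$ over the smooth curve $\tilde{C}$ I would use that $\mathcal{Z}'$ is Cohen--Macaulay (smooth over $\mathcal{V}_0$ and isomorphic to the flat $\tilde{\mathcal{Z}}$ elsewhere) with fibers of pure dimension $2$, so miracle flatness applies. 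I expect the main obstacle to be precisely this gluing step: ensuring that the triangulations chosen at the triple points are mutually compatible along the connecting double curves while keeping the new central fiber snc and the total space flat over $\tilde{C}$; the uniqueness of the $A_{k-1}$ resolution, which rigidifies the edge subdivisions, is what makes Friedman's trick succeed.
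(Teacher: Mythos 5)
Your proposal is correct and takes essentially the same route as the paper: the paper's proof of the Claim is exactly Friedman's argument from \cite[Proposition 1.2]{friedmanbase}, with the same local models (curves of $A_{k-1}$ singularities along the double curves, $xyz=s^k$ at the triple points) resolved crepantly by toric subdivision, all localized near the intersection locus and away from the Du Val points. The only difference is that the paper delegates the details to the citation, whereas you reconstruct them (the height-one lattice triangle, unimodular triangulations, the forced unit subdivision of edges that makes the gluing along double curves consistent, and the flatness check).
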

	\begin{proof}[Proof of Claim.]
		The claim follows by applying the arguments from Friedman in the proof of \cite[Proposition 1.2]{friedmanbase}. An order $n:1$ base change creates toric singularities, which locally analytically around the triple points look like
		\[\Spec \mathbb{C}[x,y,z,t]/(xyz-t^n)\]
		and the double curves become curves of $A_{n-1}$ singularities. These are toric and may be resolved by the standard toric resolution, i.e., subdividing the corresponding fans. Most importantly, it turns out that in this setting the process is crepant. This is done locally around the intersection loci so away from the canonical singularities, so it is exactly as in the snc case described in the reference.
	\end{proof}
	After applying the claim, $(\mathcal{Z}',(\mathcal{Z}_0'))$ satisfies being snc away from the Du Val singularities, has one Du Val singularity less than $(\mathcal{Y},\mathcal{Y}_0)$ and maintains $K$-triviality. Repeating this process to $(\mathcal{Z}',\mathcal{Z}_0')$, leads inductively to a finite surjective morphism $\pi:C'\rightarrow C$, a birational map and a flat proper morphism $f':\mathcal{X}\rightarrow C'$ fitting a commutative diagram
	\[\begin{tikzcd}
		{\mathcal{X}} & {} & {\mathcal{Y}\times_CC'} \\
		& {C'}
		\arrow[dashed, "p", from=1-1, to=1-3]
		\arrow["{f'}"', from=1-1, to=2-2]
		\arrow["", from=1-3, to=2-2]
	\end{tikzcd}.\]
	Any terminal singularity $(x\in \mathcal{X})$ must lie over the interior of the irreducible components $V_i$ since $\mathcal{X}_0$ is a Cartier divisor. This does not occur since we resolved all singularities in the interior of the $V_i$. Hence $\mathcal{X}$ is smooth, $K_{\mathcal{X}}\sim_{f'}\mathcal{O}_{\mathcal{X}}$ and $\mathcal{X}_0$ is an snc divisor q.e.d.
\end{proof}
\begin{coro}[Kulikov-Persson-Pinkham Theorem]
	Let $f:\mathcal{X}\rightarrow \mathbb{D} $ be a Kähler proper degeneration of smooth $K$-trivial surfaces over the complex disk. Then there is a finite and surjective base change $\pi:\mathbb{D}\rightarrow \mathbb{D}$ and $\mathcal{X}'$ a smooth manifold fitting a commutative diagram
	\[\begin{tikzcd}
		{\mathcal{X}'} & {} & {\mathcal{X}\times_\mathbb{D}\mathbb{D}} \\
		& {\mathbb{D}}
		\arrow[dashed, "p", from=1-1, to=1-3]
		\arrow["{f'}"', from=1-1, to=2-2]
		\arrow["\pi_2", from=1-3, to=2-2]
	\end{tikzcd}.\]
	Where $p$ is a birational map which is an isomorphism outside the central fiber and $f':\mathcal{X}'\rightarrow \mathbb{D}$ is a semistable degeneration with $K_{\mathcal{X}'}\sim_{f'}\mathcal{O}_{\mathcal{X}'}$.
\end{coro}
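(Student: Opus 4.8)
The plan is to deduce this from Theorem \ref{complex}, whose proof already yields a smooth total space with reduced snc central fibre; the only work is to bring the general Kähler degeneration into the dlt, globally $\mathbb{Q}$-factorial setting required there. First I would apply Mumford's Semistable Reduction Theorem (cf. the footnote to Theorem \ref{KPP}) to $f:\mathcal{X}\rightarrow\mathbb{D}$: after a finite ramified base change $\sigma:\mathbb{D}\rightarrow\mathbb{D}$, $t\mapsto t^m$, and a chain of birational modifications that are isomorphisms over the punctured disk, one obtains a semistable model $g:\mathcal{Y}\rightarrow\mathbb{D}$. The essential point to verify is that this can be carried out inside the Kähler category: the normalised base change $\mathcal{X}\times_{\mathbb{D},\sigma}\mathbb{D}$ and the toroidal blow-ups rendering the central fibre reduced and snc are finite covers and blow-ups along smooth centres, each of which preserves the Kähler condition, so $\mathcal{Y}$ is again Kähler.

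Next I would check the hypotheses of Theorem \ref{complex} for $g:\mathcal{Y}\rightarrow\mathbb{D}$ with $C^0=\{0\}$. Being semistable, $g$ has smooth total space $\mathcal{Y}$, which is therefore globally $\mathbb{Q}$-factorial since regular local rings are unique factorisation domains, so every Weil divisor is Cartier; moreover $(\mathcal{Y},\mathcal{Y}_t)$ is snc for every closed $t$, hence dlt by the definitions of Section \ref{notation}, so $g$ is a dlt morphism. Over the punctured disk all modifications are isomorphisms, so the general fibre remains a smooth $K$-trivial surface with $K_{\mathcal{Y}_t}\sim\mathcal{O}_{\mathcal{Y}_t}$. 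Thus both conditions (1) and (2) of Theorem \ref{complex} hold.

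Applying Theorem \ref{complex} to $g$ produces a finite surjective base change $\pi':\mathbb{D}\rightarrow\mathbb{D}$ (again a disk) and a birational map onto a Kulikov model $f':\mathcal{X}'\rightarrow\mathbb{D}$ that is an isomorphism over $(\pi')^{-1}(\mathbb{D}\backslash\{0\})$. By the closing lines of the proof of Theorem \ref{complex}, $\mathcal{X}'$ is in fact smooth with $\mathcal{X}'_0$ an snc divisor and $K_{\mathcal{X}'}\sim_{f'}\mathcal{O}_{\mathcal{X}'}$, i.e. a semistable degeneration of precisely the asserted shape. It then remains to assemble the diagram: the two base changes compose to a single finite surjective cover $\pi=\sigma\circ\pi':\mathbb{D}\rightarrow\mathbb{D}$, and base-changing the semistable reduction map along $\pi'$ and composing with the map from Theorem \ref{complex} gives a birational map $p:\mathcal{X}'\dashrightarrow\mathcal{X}\times_{\mathbb{D},\pi}\mathbb{D}$. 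Since each of the two maps is an isomorphism away from the central fibre, so is $p$, which completes the triangle.

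The step I expect to be the main obstacle is the Kähler bookkeeping in the reduction: Mumford's theorem is classically phrased in the projective/algebraic setting, so one must confirm that the finite base change, the normalisation and resolution of the resulting (toric) singularities, and the toroidal blow-ups all stay within the Kähler category. Granting this — which follows from the stability of the Kähler class under finite covers and under blow-ups along smooth centres — the remaining composition of base changes and of birational maps that are already isomorphisms over $\mathbb{D}\backslash\{0\}$ is routine.
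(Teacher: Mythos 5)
Your proposal is correct and coincides with the route the paper implicitly takes: the corollary is stated without proof precisely because it follows by applying Mumford's semistable reduction (invoked in the footnote to Theorem \ref{KPP}) to reach the semistable, hence dlt and $\mathbb{Q}$-factorial, Kähler setting, and then quoting Theorem \ref{complex} with $C^0=\{0\}$ and composing the two base changes and birational maps. Your extra care about preservation of the Kähler condition under the finite base change and the toroidal modifications addresses a point the paper leaves implicit; it is the standard fact (Kählerness ascends along finite maps and along projective, in particular blow-up, modifications over a disk) and does not alter the argument.
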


\begin{rmk}\label{projectiveassumption}\cite[Remark A.6]{odaka}
	For Theorem \ref{KPP}, originally Persson and Pinkham did not assume the variety to be K\"ahler just that the fibers were algebraic.	This is assumed to run the MMP for complex analytic spaces as it has not been established in the setting of Theorem \ref{KPP}.\par
Remark that there is an obstruction to generalising this. A degeneration of $K3$ surfaces that does not admit a Kulikov model was constructed by Nishiguchi in \cite[\S 4]{counterexample}. This degeneration satisfies that the central fiber contains a non-algebraic complex analytic surface as an irreducible component (so-called Kodaira class VII surfaces).
\end{rmk}

\begin{rmk}[Surface case $\kappa(\mathcal{X}_t)=0$]\label{numerically}
	Kulikov models do not exist in general for smooth surfaces with \textit{numerically} trivial canonical divisor. A counterexample for degenerations of Enriques surfaces is given by the ``flower pots'' in \cite[3.3 and Appendix 2]{persson}. By \cite[Theorem 1.2]{fujino} running the MMP gives a minimal dlt model $f:\mathcal{X}\rightarrow C$ with $K_{\mathcal{X}}\sim_{\mathbb{Q},f}\mathcal{O}_{\mathcal{X}}$.\par
	What would fail applying the same strategy is that in Step 2 it is essential that $K_{\mathcal{X}}$ is Cartier to deduce that: Discrepancies of $(V_i, D_i)$ are integers and that irreducible components $V_i$ are Cartier.
\end{rmk}
	
	\bibliographystyle{alpha}
	\bibliography{referencies.bib} 
	
\end{document}